\newcommand\ol{\overline}
\newcommand\ga{\alpha}
\newcommand\tov[1]{\xrightarrow{v}{#1}}
\newcommand\cl[1][G]{\ensuremath{\mathrm{cl}(#1)}}
\newcommand\se{\subseteq}
\newcommand\beq[1]{\begin{equation}\label{#1}}
\newcommand\eeq{\end{equation}}
\newtheorem{theorem}{Theorem}[section]
\newtheorem{lem}{Lemma}[section]
\newtheorem{cor}{Corollary}[section]
\theoremstyle{definition}
\begin{document}

\author{Nedyalko Nenov}
\title{ON THE VERTEX FOLKMAN NUMBERS $F_v(\underbrace{2,\dots,2}_r;r-1)$
and $F_v(\underbrace{2,\dots,2}_r;r-2)$%
\thanks{This work is supported by the Scientific Research Fund of
the St. Kl.~Ohridski Sofia University under contracts~85/2006.}}

\maketitle

\begin{abstract}
For a graph $G$ the symbol $G\tov(a_1,\dots,a_r)$ means that in every
$r$-coloring of the vertices of $G$, for some $i\in\{1,2,\dots,r\}$,
there exists a monochromatic $a_i$-clique of color $i$. The vertex
Folkman numbers
\[
F_v(a_1,\dots,a_r;q)=\min\{|V(G)|:G\tov(a_1,\dots,a_r)
\text{ and }
K_q\nsubseteq G\}
\]
are considered. We prove that $F_v(\underbrace{2,\dots,2}_r;r-1)=r+7$,
$r\ge 6$ and $F_v(\underbrace{2,\dots,2}_r;r-2)=r+9$, $r\ge 8$.

\textbf{Keywords.} Folkman graphs, Folkman numbers

\textbf{2000 Math.~Subject Classification.} 05C55
\end{abstract}

\section{Introduction}

We consider only finite, non-oriented graphs without loops and multiple edges.
We call a $p$-clique of the graph $G$ a set of $p$ vertices, each two of which
are adjacent. The largest positive integer $p$, such that the graph $G$
contains a $p$-clique is denoted by \cl. In this paper we shall also use
the following notations:
\begin{itemize}
  \item $V(G)$ is the vertex set of the graph $G$;
  \item $E(G)$ is the edge set of the graph $G$;
  \item $\ol G$ is the complement of $G$;
  \item $G[V]$, $V\se V(G)$ is the subgraph of $G$ induced by $V$;
  \item $G-V$,  $V\se V(G)$ is the subgraph of $G$ induced by $V(G)\setminus V$;
  \item $\ga(G)$ is the vertex independence number of $G$;
  \item $\chi(G)$ is the chromatic number of $G$;
  \item $f(G)=\chi(G)-\cl$;
  \item $K_n$ is the complete graph on $n$ vertices;
  \item $C_n$ is the simple cycle on $n$ vertices.
\end{itemize}

Let $G_1$ and $G_2$ be two graphs without common vertices.
We denote by $G_1+G_2$ the graph $G$ for which $V(G)=V(G_1)\cup V(G_2)$ and
$E(G)=E(G_1)\cup E(G_2)\cup E'$, where $E'=\{[x,y]:x\in V(G_1), y\in V(G_2)\}$.

The Ramsey number $R(p,q)$ is the smallest natural $n$ such that for every
$n$-vertex graph $G$ either $\cl\ge p$ or $\ga(G)\ge q$. An exposition of
the results on the Ramsey numbers is given in \cite{NN:25}. We shall need
Table~\ref{NN:t1} of the known Ramsey numbers $R(p,3)$ (see \cite{NN:25}).
\begin{table}[h]
\centering
\begin{tabular}{|l|*8{c|}}
\hline
$p$&3&4&5&6&7&8&9&10\\
\hline
$R(p,3)$&6&9&14&18&23&28&36&40--43\\
\hline
\end{tabular}
\caption{Known Ramsey numbers}\label{NN:t1}
\end{table}

\textbf{Definition.}
Let $a_1,\dots,a_r$ be positive integers. We say that the $r$-coloring
\[
V(G)=V_1\cup\dots\cup V_r,
\quad
V_i\cap V_j=\emptyset,\ i\ne j
\]
of the vertices of the graph $G$ is $(a_1,\dots,a_r)$-free, if $V_i$
does not contain an $a_i$-clique for each $i\in\{1,\dots,r\}$.
The symbol $G\tov(a_1,\dots,a_r)$ means that there is no
$(a_1,\dots,a_r)$-free coloring of the vertices of $G$.

Let $a_1,\dots,a_r$ and $q$ be natural numbers. Define
\begin{align*}
H_v(a_1,\dots,a_r;q)&=\{G:G\tov(a_1,\dots,a_r)\text{ and }\cl<q\},\\
F_v(a_1,\dots,a_r;q)&=\min\{|V(G)|:G\in H_v(a_1,\dots,a_r;q)\}.
\end{align*}
We say that the graph $G\in H_v(a_1,\dots,a_r;q)$ is an extremal graph in
$H_v(a_1,\dots,a_r;q)$, if $|V(G)|=F_v(a_1,\dots,a_r;q)$.

It is clear that $G\tov(a_1,\dots,a_r)$ implies $\cl\ge\max\{a_1,\dots,a_r\}$.
Folkman~\cite{NN:3} proved that there exists a graph $G$ such that
$G\tov(a_1,\dots,a_r)$ and $\cl=\max\{a_1,\dots,a_r\}$. Therefore
\beq{NN:eq11}
F_v(a_1,\dots,a_r;q)\text{ exists}\iff q>\max\{a_1,\dots,a_r\}.
\eeq
The numbers $F_v(a_1,\dots,a_r;q)$ are called vertex Folkman numbers.

If $a_1,\dots,a_r$ are positive integers, $r\ge 2$ and $a_i=1$ then it is
easily to see that
\[
G\tov(a_1,\dots,a_i,\dots,a_r)\iff G\tov(a_1,\dots,a_{i-1},a_{i+1},a_r).
\]
Thus it is enough to consider just such numbers $F_v(a_1,\dots,a_r;q)$ for
which $a_i\ge 2$, $i=1,\dots,r$. In this paper we consider the vertex Folkman
numbers $F_v(2,\dots,2;q)$. Set
\[
(\underbrace{2,\dots,2}_r)=(2_r)
\quad\text{and}\quad
F_v(\underbrace{2,\dots,2}_r;q)=F_v(2_r;q).
\]
By~\eqref{NN:eq11}
\beq{NN:eq12}
F_v(2_r;q)\text{ exists}\iff q\ge 3.
\eeq
It is clear that
\beq{NN:eq13}
G\tov(2_r)\iff\chi(G)\ge r+1.
\eeq
Since $K_{r+1}\tov(2_r)$ and $K_r\not\tov(2_r)$ we have
\[
F_v(2_r;q)=r+1
\quad\text{if}\quad
q\ge r+2.
\]

In \cite{NN:2} Dirac proved the following

\begin{theorem}[\cite{NN:2}]\label{NN:th11}
Let $G$ be a graph such that $\chi(G)\ge r+1$ and $\cl\le r$. Then
\begin{enumerate}[\indent\rm(a)]
\item
$|V(G)|\ge r+3$;
\item
If $|V(G)|=r+3$ then $G=K_{r-3}+C_5$.
\end{enumerate}
\end{theorem}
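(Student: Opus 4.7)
The plan is to reduce to the case where $G$ is itself vertex $(r+1)$-critical by passing to a chromatic-critical subgraph $G'\subseteq G$. Since $|V(G')|\le|V(G)|$ and $\cl[G']\le\cl$, part (a) for the original $G$ follows from part (a) for $G'$, and in the setting of part (b) the hypothesis $|V(G)|=r+3$ together with part (a) forces $G'=G$, so criticality of $G$ itself is automatic there. The classical Dirac lower bound then supplies $\delta(G)\ge r$, equivalently $\Delta(\ol G)\le|V(G)|-1-r$.

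For (a) I dispose of the two candidate sizes directly. If $|V(G)|=r+1$ then $\ol G$ is edgeless and $G=K_{r+1}$, contradicting $\cl\le r$. If $|V(G)|=r+2$ then $\ol G$ has maximum degree at most $1$, so it is a matching with $k$ edges (and $r+2-2k$ isolated vertices); the two hypotheses become $\cl=\ga(\ol G)=r+2-k\le r$ (forcing $k\ge 2$) and $\chi(G)=r+2-k\ge r+1$ (forcing $k\le 1$), which is impossible. Hence $|V(G)|\ge r+3$.

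For (b), set $n=r+3$; then $\Delta(\ol G)\le 2$ and $\ol G$ is a disjoint union of isolated vertices, paths, and cycles. Because different components of $\ol G$ are completely joined to one another inside $G$, one has $\chi(G)=\sum_H \theta(H)$ and $\cl=\sum_H \ga(H)$, the sums running over components $H$ of $\ol G$ (with $\theta$ the clique-cover number). Writing $s(H)=|V(H)|-\theta(H)$ and $\ell(H)=|V(H)|-\ga(H)$, the hypotheses translate to $\sum_H s(H)\le 2$ and $\sum_H \ell(H)\ge 3$. The standard identities $\ga(P_k)=\theta(P_k)=\lceil k/2\rceil$, together with $\ga(C_k)=\lfloor k/2\rfloor$ and $\theta(C_k)=\lceil k/2\rceil$ for $k\ge 4$ and $\ga(C_3)=\theta(C_3)=1$, show that $\ell(H)>s(H)$ occurs only for odd cycles of length $\ge 5$, in which case $\ell(H)=s(H)+1$ and $s(H)=(k-1)/2\ge 2$. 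The two sum constraints then force exactly one such component, necessarily with $s(H)=2$, i.e.\ $H=C_5$; all remaining components contribute $0$ to both sums and must be isolated vertices. Thus $\ol G\cong C_5\cup(r-2)K_1$, and therefore $G\cong K_{r-2}+C_5$.

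The decisive move is the reduction to vertex-criticality, which bounds $\Delta(\ol G)$ and turns the problem into a finite case check on the components of $\ol G$; once the small numerical identities for $\ga$ and $\theta$ on paths and cycles are tabulated, the extremal structure is forced and no further difficulty arises.
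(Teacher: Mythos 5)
The paper offers no proof of this statement at all: it is quoted as Dirac's theorem from reference [2], so your argument is not competing with an in-paper proof but supplying one. Your proof is correct and is essentially the classical route: pass to a vertex-$(r+1)$-critical induced subgraph, invoke the minimum-degree bound $\delta\ge r$ for critical graphs to get $\Delta(\ol G)\le |V(G)|-1-r$, kill the cases $|V(G)|=r+1,r+2$ by inspecting $\ol G$ (empty graph, resp.\ a matching), and in the extremal case $|V(G)|=r+3$ decompose $\ol G$ into components of maximum degree at most $2$ and compare $\sum_H\bigl(|V(H)|-\theta(H)\bigr)\le 2$ with $\sum_H\bigl(|V(H)|-\ga(H)\bigr)\ge 3$; the tabulated values of $\ga$ and $\theta$ on paths and cycles do force a single $C_5$ component plus isolated vertices, and all the small identities you use check out. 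One point worth flagging: your conclusion $G\cong K_{r-2}+C_5$ differs from the paper's stated $K_{r-3}+C_5$, and you are the one who is right --- $K_{r-3}+C_5$ has only $r+2$ vertices, which is inconsistent with $|V(G)|=r+3$, and the specialization of Theorem~\ref{NN:th13} to $a_1=\dots=a_r=2$ (where $m=r+1$, $p=2$) likewise gives $K_{m-p-1}+\ol C_{2p+1}=K_{r-2}+C_5$. So the paper's statement of Theorem~\ref{NN:th11}(b) (and of Theorem~\ref{NN:th12}(b)) contains a typo that your proof silently corrects; nothing downstream in the paper is affected, since only part (a) of these theorems is used in the later arguments.
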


According to~\eqref{NN:eq13}, Theorem~\ref{NN:th11} is equivalent to
the following

\begin{theorem}\label{NN:th12}
Let $r\ge 2$ be a positive integer. Then
\begin{enumerate}[\indent\rm(a)]
\item
$F_v(2_r;r+1)=r+3$;
\item
$K_{r-3}+C_5$ is the only extremal graph in $H_v(2_r;r+1)$.
\end{enumerate}
\end{theorem}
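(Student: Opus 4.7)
The plan is to read Dirac's Theorem~\ref{NN:th11} through the chromatic equivalence \eqref{NN:eq13}: since $G\tov(2_r)$ holds if and only if $\chi(G)\ge r+1$, a graph $G$ lies in $H_v(2_r;r+1)$ if and only if $\chi(G)\ge r+1$ and $\cl\le r$. Thus $F_v(2_r;r+1)$ is precisely the minimum order of a graph in the Dirac class of Theorem~\ref{NN:th11}, and the extremal graphs for the two problems coincide; Theorem~\ref{NN:th12} is then a line-by-line translation of Theorem~\ref{NN:th11}.

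For part~(a), the lower bound $F_v(2_r;r+1)\ge r+3$ is an immediate restatement of Theorem~\ref{NN:th11}(a). For the matching upper bound I would exhibit the extremal Dirac graph from Theorem~\ref{NN:th11}(b). Using that the join satisfies $\chi(G_1+G_2)=\chi(G_1)+\chi(G_2)$ and $\cl[G_1+G_2]=\cl[G_1]+\cl[G_2]$, together with $\chi(C_5)=3$ and $\cl[C_5]=2$, one checks directly that this graph has chromatic number $r+1$, clique number $r$, and $r+3$ vertices. Hence it lies in $H_v(2_r;r+1)$ and yields $F_v(2_r;r+1)\le r+3$.

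Part~(b) is the same translation in the opposite direction: any extremal $G\in H_v(2_r;r+1)$ is, by \eqref{NN:eq13}, a graph on $r+3$ vertices with $\chi(G)\ge r+1$ and $\cl\le r$, so Theorem~\ref{NN:th11}(b) identifies $G$ uniquely as the Dirac extremal graph.

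The proof of Theorem~\ref{NN:th12} thus contains no substantive work beyond the equivalence \eqref{NN:eq13}; the real content sits in Dirac's theorem, which is quoted as input. The main obstacle, were one to reprove that input directly, is the characterization in~(b): after establishing $|V(G)|\ge r+3$ (which follows by arguing that a graph on at most $r+2$ vertices with $\cl\le r$ admits two disjoint non-edges, and hence an $r$-coloring obtained by identifying their endpoints), one has to show that any $G$ attaining equality splits as a join of a clique with $C_5$. This forces one to locate a large set of vertices of full degree in $G$ and then analyse the resulting $5$-vertex subgraph on the remaining vertices, showing by a short case check that it must be $C_5$.
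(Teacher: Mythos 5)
Your proposal is correct and matches the paper, which gives no separate proof of Theorem~\ref{NN:th12} but simply declares it equivalent to Dirac's Theorem~\ref{NN:th11} via the equivalence~\eqref{NN:eq13}; your verification that $K_{r-3}+C_5$ has the stated parameters is exactly the routine check implicit in that translation. The closing paragraph sketching how one would reprove Dirac's theorem is not needed, since that result is quoted as input here just as in the paper.
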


In \cite{NN:14} {\L}uczak, Ruci\'nski and Urba\'nski defined
for arbitrary positive integers $a_1,\dots,a_r$ the numbers
\beq{NN:eq14}
m=\sum_{i=1}^r(a_i-1)+1
\quad\text{and}\quad
p=\max\{a_1,\dots,a_r\}.
\eeq
and proved the following extension of Theorem~\ref{NN:th12}.

\begin{theorem}[\cite{NN:14}]\label{NN:th13}
Let $a_1,\dots,a_r$ be positive integers and $m$ and $p$ be defined
by~\eqref{NN:eq14}. Let $m\ge p+1$. Then
\begin{enumerate}[\indent\rm(a)]
\item
$F_v(a_1,\dots,a_r;m)=m+p$;
\item
$K_{m-p-1}+\ol C_{2p+1}$ is the only extremal graph in $H_v(a_1,\dots,a_r;m)$.
\end{enumerate}
\end{theorem}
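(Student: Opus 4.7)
The plan is to prove~(a) by matching upper and lower bounds and then to sharpen the lower-bound analysis for uniqueness in~(b).

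\emph{Upper bound.} Let $G_0 := K_{m-p-1} + \ol C_{2p+1}$; it has $m+p$ vertices, and using $\omega(G_1+G_2) = \omega(G_1) + \omega(G_2)$ together with $\omega(\ol C_{2p+1}) = \ga(C_{2p+1}) = p$, we get $\omega(G_0) = m-1 < m$. For the arrowing, I would assume an $(a_1,\dots,a_r)$-free coloring $V_1 \cup \cdots \cup V_r$ and set $s_i = |V_i \cap V(K_{m-p-1})|$ and $W_i = V_i \cap V(\ol C_{2p+1})$. The full join forces $s_i + \ga(C_{2p+1}[W_i]) \le a_i - 1$ for every $i$. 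If some $W_{i_0} = V(\ol C_{2p+1})$, then $p = \ga(C_{2p+1}) \le a_{i_0} - 1 \le p - 1$, impossible. Otherwise each $C_{2p+1}[W_i]$ is a disjoint union of paths, so $|W_i| \le 2\ga(C_{2p+1}[W_i]) \le 2(a_i - 1 - s_i)$; summing gives $2p+1 = \sum|W_i| \le 2(m-1) - 2(m-p-1) = 2p$, a contradiction.

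\emph{Lower bound.} For any $G \in H_v(a_1,\dots,a_r;m)$ I would first observe that $\chi(G) \ge m$: a proper coloring of $G$ with at most $m-1 = \sum(a_i - 1)$ classes could be regrouped into $r$ blocks of sizes $a_1 - 1, \dots, a_r - 1$, and since each block is $(a_i-1)$-colorable its clique number is at most $a_i - 1$, producing an $(a_1,\dots,a_r)$-free coloring—contradicting $G \tov (a_1,\dots,a_r)$. Combined with $\omega(G) \le m - 1$, the task becomes: any graph $G$ with $\chi(G) \ge m$, $\omega(G) \le m - 1$, and $|V(G)| < m + p$ fails to arrow $(a_1,\dots,a_r)$. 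For $p = 2$ this is exactly Theorem~\ref{NN:th12}. For $p \ge 3$ I would induct on $p$, fixing a maximum clique $K$ of $G$: if $|V(K)| < m - 1$ an explicit free coloring can be built directly, so one may assume $|V(K)| = m - 1$, after which the small remainder $V(G) \setminus V(K)$ of size less than $p+1$ does not contain enough structure to uphold the arrowing condition.

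\emph{Uniqueness and main obstacle.} When $|V(G)| = m + p$, all of the inequalities in the lower-bound step should become equalities, so one expects $G - V(K) \cong \ol C_{2p+1}$ and every vertex of $V(K)$ to be joined to every vertex of $V(G) \setminus V(K)$, giving $G = K_{m-p-1} + \ol C_{2p+1}$. The main obstacle is the lower bound (and hence uniqueness) when $p \ge 3$: Dirac's theorem by itself yields only $|V(G)| \ge m + 2$, so to reach the sharper bound $m + p$ one must genuinely exploit $G \tov (a_1,\dots,a_r)$ rather than merely $\chi(G) \ge m$, and carefully extracting the $\ol C_{2p+1}$ substructure from the small complement of a maximum clique is the delicate heart of the argument.
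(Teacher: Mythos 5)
A preliminary remark: the paper contains no proof of Theorem~\ref{NN:th13} --- it is imported wholesale from \cite{NN:14} --- so there is nothing internal to compare your argument against and it must stand on its own. The parts you actually carry out are correct. The upper bound is complete: the clique number of $K_{m-p-1}+\ol C_{2p+1}$ is $(m-p-1)+\ga(C_{2p+1})=m-1$; in any $(a_1,\dots,a_r)$-free colouring the join forces $s_i+\ga(C_{2p+1}[W_i])\le a_i-1$; the case $W_{i_0}=V(C_{2p+1})$ dies on $p\le a_{i_0}-1\le p-1$; and the count $2p+1=\sum|W_i|\le 2(m-1)-2(m-p-1)=2p$ finishes it. The observation that $G\tov(a_1,\dots,a_r)$ forces $\chi(G)\ge m$ (regroup the colour classes into blocks of $a_i-1$ classes) is also correct, as is the identification of the case $p=2$ with Theorem~\ref{NN:th12}.

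Everything past that point is a statement of intent rather than a proof, and two of the asserted steps are concretely problematic. First, the claim that when a maximum clique $K$ has $|V(K)|<m-1$ ``an explicit free coloring can be built directly'' is unsupported and points in the wrong direction: lowering the clique number makes it harder, not easier, for a graph with $\chi(G)\ge m$ to admit a free colouring --- that is exactly why the numbers $F_v(2_r;q)$ in this paper grow as $q$ shrinks --- and in this sub-case the true reason the arrowing fails is that such graphs are forced to have at least $m+p$ vertices, a Dirac-type statement of the same depth as the theorem itself (compare the $f(G)=2$ case of Lemma~\ref{NN:l21}, which already needs \eqref{NN:eq15}, \eqref{NN:eq16} and Theorem~\ref{NN:th14}). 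Second, the uniqueness sketch is internally inconsistent: if $|V(K)|=m-1$ and $|V(G)|=m+p$, then $G-V(K)$ has only $p+1$ vertices and cannot be $\ol C_{2p+1}$; in the genuine extremal graph a maximum clique absorbs $p$ vertices of the $\ol C_{2p+1}$ part, so its complement in $V(G)$ is not a cycle complement at all. Moreover there is no ``chain of inequalities'' to turn into equalities, because the lower-bound argument for $p\ge 3$ was never actually written down. As it stands you have proved $F_v(a_1,\dots,a_r;m)\le m+p$ in full, and the rest of the theorem only for $p=2$.
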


An another extension of Theorem~\ref{NN:th11} was given in \cite{NN:21}.

From~\eqref{NN:eq11} it follows that the numbers $F_v(a_1,\dots,a_r;m-1)$
exists if and only if $m\ge p+2$. The exact values of all numbers
$F_v(a_1,\dots,a_r;m-1)$ for which $p=\max\{a_1,\dots,a_r\}\le 4$ are known.
A detailed exposition of these results was given in \cite{NN:13}
and \cite{NN:23}. We do not know any exact values of $F_v(a_1,\dots,a_r;m-1)$
in the case when $\max\{a_1,\dots,a_r\}\ge 5$. Here we shall note only
the values $F_v(a_1,\dots,a_r;m-1)$ when $a_1=a_2=\dots=a_r=2$, i.e.
of the numbers $F_v(2_r;r)$. From~\eqref{NN:eq12} these numbers exist
if and only if $r\ge 3$. If $r=3$ and $r=4$ we have that
\begin{align}\label{NN:eq15}
F_v(2_3;3)&=11;\\
F_v(2_4;4)&=11.
\label{NN:eq16}
\end{align}

The inequality $F_v(2_3;3)\le 11$ was proved in \cite{NN:15} and the opposite
inequality $F_v(2_3;3)\ge 11$ was proved in \cite{NN:1}.
The equality~\eqref{NN:eq16} was proved in\cite{NN:18} (see also \cite{NN:19}).
If $r\ge 5$ we have the following

\begin{theorem}[\cite{NN:17}, see also \cite{NN:24}]\label{NN:th14}
Let $r\ge 5$. Then:
\begin{enumerate}[\indent\rm(a)]
\item
$F_v(2_r;r)=r+5$;
\item
$K_{r-5}+C_5+C_5$ is the only extremal graph in $H_v(2_r;r)$.
\end{enumerate}
\end{theorem}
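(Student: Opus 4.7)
Setting $H=K_{r-5}+C_5+C_5$, additivity of $\chi$ and $\cl$ under the join together with $\chi(C_5)=3$, $\cl[C_5]=2$ give $\chi(H)=r+1$ and $\cl[H]=r-1$. By~\eqref{NN:eq13}, $H\in H_v(2_r;r)$, hence $F_v(2_r;r)\le|V(H)|=r+5$.

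\textbf{Base case $r=5$.} For the matching lower bound at $r=5$, pass to a $6$-vertex-critical subgraph so that $\chi(G)=6$ and $\delta(G)\ge 5$, and split on $\ga(G)$. If $\ga(G)\ge 3$, delete an independent set $I$ of size $3$; then $G-I$ has $\chi\ge 5$ and $\cl\le 4$, so Theorem~\ref{NN:th11} applied with $r=4$ gives $|V(G-I)|\ge 7$, whence $|V(G)|\ge 10$. If $\ga(G)\le 2$, then $\ol G$ is triangle-free with $\ga(\ol G)=\cl\le 4$; combining $R(5,3)=14$ from Table~\ref{NN:t1} with the bound $\Delta(\ol G)\le|V(G)|-1-\delta(G)\le 3$ pins the structure of $\ol G$ down enough to force $|V(G)|\ge 10$, with $G=C_5+C_5$ at equality.

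\textbf{Inductive step $r\ge 6$.} Assume the result for $r-1$. The crux is the claim that every $(r+1)$-critical $G\in H_v(2_r;r)$ with $|V(G)|\le r+5$ contains a vertex $u$ adjacent to all others. Granting this, $u$ lies in every maximum clique and requires its own color, so $\chi(G-u)=\chi(G)-1\ge r$ and $\cl[G-u]=\cl-1\le r-2$; hence $G-u\in H_v(2_{r-1};r-1)$. The induction hypothesis gives $|V(G-u)|\ge(r-1)+5=r+4$, with equality forcing $G-u=K_{r-6}+C_5+C_5$. Therefore $|V(G)|\ge r+5$, and at equality $G=\{u\}+(K_{r-6}+C_5+C_5)=K_{r-5}+C_5+C_5$.

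\textbf{Main obstacle.} The universal-vertex claim is the heart of the argument. From $\delta(G)\ge r$ and $|V(G)|\le r+5$ we obtain $\Delta(\ol G)\le 4$; the task is to show that some vertex of $\ol G$ is isolated. I would argue by contradiction: supposing not, pick a non-edge $\{u,v\}$ of $G$ and consider the identified graph $G/uv$ (which preserves $G\tov(2_r)$ since any lifted $r$-coloring gives a monochromatic edge in $G$ that survives the identification), and exploit $\cl\le r-1$ together with the criticality of $G$ to ensure $\cl[G/uv]\le r-1$, producing a strictly smaller member of $H_v(2_r;r)$ and contradicting minimality. Choosing the non-edge so as to control the clique number of the quotient, together with the delicate $\ga\le 2$ subcase of the base where Ramsey-type analysis must nail down the structure as $C_5+C_5$, is where the bulk of the technical work lies.
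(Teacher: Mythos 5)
The paper does not actually prove Theorem~\ref{NN:th14}; it quotes it from \cite{NN:17} and \cite{NN:24}. The fair comparison is therefore with the technique the paper uses for the parallel statements (Lemma~\ref{NN:l21}, Theorem~\ref{NN:th16}): Gallai's decomposition of vertex-critical graphs into a join (Theorem~\ref{NN:th21}) combined with the additivity \eqref{NN:eq23} of $f=\chi-\mathrm{cl}$ and Dirac's Theorem~\ref{NN:th11}. Your upper bound and the overall induction skeleton are fine, but two steps of your lower-bound argument have genuine gaps.

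First, the universal-vertex claim. Your proposed proof by contracting a non-edge $\{u,v\}$ does not go through: in $G/uv$ the merged vertex is adjacent to $N(u)\cup N(v)$, not to $N(u)\cap N(v)$, so any $(r-1)$-clique contained in $N(u)\cup N(v)$ but in neither neighbourhood alone becomes an $r$-clique after identification. This is not a technicality you can expect to choose your way around: already in $C_5$ every contraction of a non-edge creates a triangle, and in the extremal graph $C_5+C_5$ (which has no universal vertex) every contraction of a non-edge raises the clique number. You give no criterion for selecting a ``safe'' non-edge and none is available in general. The working substitute is exactly Theorem~\ref{NN:th21}: an extremal $G$ is vertex-critical $(r+1)$-chromatic with $|V(G)|\le r+5<2(r+1)-1$, hence $G=G_1+G_2$; by Lemma~\ref{NN:l22}(b) one has $\cl=r-1$, so $f(G)=2=f(G_1)+f(G_2)$. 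If some $f(G_i)=0$ that summand is complete and you may peel off $K_1$ and induct; if $f(G_1)=f(G_2)=1$, Dirac's theorem applied to each summand gives $|V(G_i)|\ge\chi(G_i)+2$, summing to $|V(G)|\ge r+5$, and the equality case of Dirac identifies each $G_i$ as a complete graph joined with $C_5$, which yields the uniqueness statement as well.

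Second, the base case $r=5$. The subcase $\ga(G)\ge3$ is correct for the bound $|V(G)|\ge10$, but the subcase $\ga(G)\le2$ is only asserted: ``pins the structure of $\ol G$ down enough'' is not an argument, and this is precisely where the extremal graph lives, since $\ga(C_5+C_5)=2$. One must actually analyse triangle-free complements $\ol G$ with $\Delta(\ol G)\le|V(G)|-6$, matching number $|V(G)|-6$ and independence number at most $4$ on at most $9$ vertices, and separately show that no $10$-vertex extremal graph with $\ga(G)\ge3$ exists if part (b) is to be proved. As written, part (b) is essentially unaddressed: uniqueness at $r=5$ is not established, and for $r\ge6$ it rests on the unproved universal-vertex claim.
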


Theorem~\ref{NN:th14}(a) was proved also in \cite{NN:8} and \cite{NN:14}.

According to \eqref{NN:eq12} the number $F_v(2_r;r-1)$ exists if and only if
$r\ge 4$. In \cite{NN:17} we prove that
\beq{NN:eq17}
F_v(2_r;r-1)=r+7
\text{ if }
r\ge 8.
\eeq

In this paper we shall improve~\eqref{NN:eq17} proving the following

\begin{theorem}\label{NN:th15}
Let $r\ge 4$ be an integer. Then
\begin{enumerate}[\indent\rm(a)]
\item
$F_v(2_r;r-1)\ge r+7$;
\item
$F_v(2_r;r-1)=r+7$ if $r\ge 6$;
\item
$F_v(2_5;4)\le 16$.
\end{enumerate}
\end{theorem}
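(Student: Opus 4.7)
\medskip

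\noindent\textbf{Proof proposal.}
I would prove the three claims separately, using Theorem~\ref{NN:th14} and the Ramsey bound $R(5,3)=14$ as the main external tools; the chief difficulty is the case $\alpha(G)=2$ of part~(a).

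For part~(a), the small cases $r\in\{4,5\}$ reduce to earlier equalities via the vertex-removal inequality $F_v(2_r;q)\ge F_v(2_{r-1};q)+1$: deleting any vertex drops $\chi$ by at most one while preserving $\cl<q$, so together with $F_v(2_3;3)=F_v(2_4;4)=11$ from \eqref{NN:eq15}--\eqref{NN:eq16} this gives $F_v(2_4;3)\ge 12$ and $F_v(2_5;4)\ge 12=r+7$. For $r\ge 6$, I would argue by contradiction: if $G\in H_v(2_r;r-1)$ has $|V(G)|\le r+6$, then since $H_v(2_r;r-1)\subseteq H_v(2_r;r)$, Theorem~\ref{NN:th14} gives $|V(G)|\ge r+5$, and equality is ruled out because the unique extremal graph $K_{r-5}+C_5+C_5$ has $\cl=r-1$. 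So $|V(G)|=r+6$. Taking a maximum independent set $I\subseteq V(G)$, we have $G-I\in H_v(2_{r-1};r)$, so Theorem~\ref{NN:th14} at parameter $r-1$ (legal since $r-1\ge 5$) yields $|V(G-I)|\ge r+4$, hence $\alpha(G)\le 2$; the case $\alpha(G)=1$ forces $G=K_{r+6}$ and is immediate, and the uniqueness clause of Theorem~\ref{NN:th14} forces $G-I=K_{r-6}+C_5^{(1)}+C_5^{(2)}$ when $\alpha(G)=2$.

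Writing $I=\{u,v\}$ and $A=V(K_{r-6})$, I would exploit that for every vertex $w$ of a pentagon there is a $3$-colouring with $\{w\}$ as a singleton colour class. The hypothesis $\chi(G)=r+1$ means that in every $r$-colouring of $G-I$, at least one of $u,v$ has its neighbourhood meeting every colour class. If $u$ is not adjacent to every vertex of $A$, it always has the colour of a non-neighbour in $A$ available, so in every colouring $v$ must be saturated; varying the singleton vertex through $V(C_5^{(1)})$ and $V(C_5^{(2)})$ then forces $v$ to be joined to all of $G-I$, producing the clique $\{v\}\cup A\cup e_1\cup e_2$ of size $r-1$; contradiction. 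Hence both $u$ and $v$ are adjacent to all of $A$. Setting $X_j=N(u)\cap V(C_5^{(j)})$ and $Y_j=N(v)\cap V(C_5^{(j)})$, whenever $X_j\neq V(C_5^{(j)})$ I choose $c_j$ with singleton $w_j\notin X_j$: saturation of $v$ on $C_5^{(j)}$ forces $|Y_j|\ge 3$, so $Y_j$ contains an edge of $C_5^{(j)}$; after a short analysis of the asymmetric sub-cases where one of $X_1,X_2$ equals the full pentagon and the other does not, one obtains a clique $\{v\}\cup A\cup e_1\cup e_2$ of size $r-1$ in every remaining case, again contradicting $\cl(G)\le r-2$.

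For part~(b) the lower bound is~(a); for the upper bound I take $G=K_{r-6}+L$, where $L$ is any $13$-vertex graph with $\cl(L)\le 4$ and $\alpha(L)\le 2$ (existing since $R(5,3)=14$). Then $|V(G)|=r+7$, $\cl(G)=(r-6)+\cl(L)\le r-2$, and every colour class of a proper colouring of $L$ has at most $\alpha(L)\le 2$ vertices, so $\chi(L)\ge\lceil 13/2\rceil=7$ and $\chi(G)\ge r+1$; hence $G\in H_v(2_r;r-1)$. For part~(c) I would exhibit an explicit $K_4$-free graph $G$ on $16$ vertices with $\chi(G)\ge 6$ (so $G\in H_v(2_5;4)$); the expected construction is a small combinatorial amalgam of short odd cycles whose $K_4$-freeness is manifest from the definition and whose chromatic lower bound is checked by direct case analysis --- producing such a compact graph is the secondary obstacle, independent of the chromatic-saturation propagation that drives part~(a).
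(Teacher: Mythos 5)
Your argument for part (a) is correct but follows a genuinely different route from the paper. For $r=4,5$ the paper uses Lemma~\ref{NN:l23} together with the Ramsey values $R(3,3)=6$, $R(4,3)=9$ to force $\ga(G)\ge 3$; your one-vertex-deletion bound $F_v(2_r;r-1)\ge F_v(2_{r-1};r-1)+1$ combined with \eqref{NN:eq15}--\eqref{NN:eq16} is a simpler way to the same conclusion. For $r\ge 6$ the paper proceeds by induction on $r$, splitting on whether $G\tov(2,2,r-2)$ and importing three external results: Theorem~\ref{NN:th22} from \cite{NN:12}, the bound \eqref{NN:eq25} and the value \eqref{NN:eq26}. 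You instead rule out a putative $(r+6)$-vertex member of $H_v(2_r;r-1)$ directly, using the \emph{uniqueness} clause of Theorem~\ref{NN:th14}(b) (which the paper's proof of this theorem never invokes) to pin down $G-I=K_{r-6}+C_5+C_5$, and then a colouring-extension/saturation analysis on the $25$ essentially distinct $r$-colourings of that graph. I checked that your case analysis closes: once both $u,v$ are joined to all of $A$, one of $N(u)\cap\VC[1]$, $N(u)\cap\VC[2]$ must be independent (else $u$ already yields an $(r-1)$-clique), say the first, of size $\le 2$; choosing the singleton of $\Cf[1]$ outside it and letting the singleton of $\Cf[2]$ vary forces $v$ to be saturated in all these colourings, giving $|N(v)\cap\VC[1]|\ge 3$ and $N(v)\cap\VC[2]=\VC[2]$, so both contain edges and $\{v\}\cup A\cup e_1\cup e_2$ is an $(r-1)$-clique --- contradiction. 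Your approach is more self-contained (no appeal to \cite{NN:12}, \cite{NN:20}, \cite{NN:22}) at the cost of a finite structural analysis; the paper's approach is shorter given those references. One slip to fix: you place $G-I$ in $H_v(2_{r-1};r)$, but to invoke Theorem~\ref{NN:th14} at parameter $r-1$ (both the bound $r+4$ and the uniqueness of the extremal graph) you need $G-I\in H_v(2_{r-1};r-1)$, which does hold since $\cl[G-I]\le\cl\le r-2<r-1$. Part (b) is the paper's construction exactly ($K_{r-6}+P$ with $P$ the $13$-vertex Greenwood--Gleason graph).

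Part (c) is a genuine gap: you do not actually produce the $16$-vertex graph, and you describe finding one as an unresolved ``secondary obstacle.'' No explicit amalgam of odd cycles or chromatic case analysis is needed. Since $R(4,4)=18$, there is a $16$-vertex graph $G$ with $\cl=3$ and $\ga(G)=3$ (e.g.\ the Paley-type Ramsey graph of \cite{NN:6}); then $\chi(G)\ge 16/3>5$, so $\chi(G)\ge 6$, hence $G\tov(2_5)$ by \eqref{NN:eq13} and $G\in H_v(2_5;4)$, giving $F_v(2_5;4)\le 16$. Without this (or an equivalent explicit witness) your part (c) is unproved.
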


In \cite{NN:9} Jensen and Royle show that
\beq{NN:eq18}
F_v(2_4;3)=22.
\eeq

We see from Theorem~\ref{NN:th15} and~\eqref{NN:eq18} that $F_v(2_5;4)$
is the only unknown number of the kind $F(2_r;r-1)$.

From~\eqref{NN:eq12} the Folkman number $F(2_r;r-2)$ exists if and only if
$r\ge 5$. In \cite{NN:16} we prove that $F_v(2_r;r-2)=r+9$ if $r\ge 11$.
In this paper we shall improve this result proving the folloing

\begin{theorem}\label{NN:th16}
Let $r\ge 5$ be an integer. Then
\begin{enumerate}[\indent\rm(a)]
\item
$F_v(2_r;r-2)\ge r+9$;
\item
$F_v(2_r;r-2)=r+9$ if $r\ge 8$.
\end{enumerate}
\end{theorem}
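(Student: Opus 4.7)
For part~(b), I would exhibit the explicit witness $G=K_{r-8}+G_0$, where
$G_0$ is any graph on $17$ vertices with $\cl[G_0]\le 5$ and
$\alpha(G_0)\le 2$. Such a $G_0$ exists because $R(6,3)=18$ (see
Table~\ref{NN:t1}), and $\alpha(G_0)\le 2$ forces
$\chi(G_0)\ge\lceil 17/2\rceil=9$. For $r\ge 8$ this gives
$|V(G)|=r+9$, $\cl[G]=(r-8)+5=r-3$, and $\chi(G)\ge(r-8)+9=r+1$;
by~\eqref{NN:eq13}, $G\tov(2_r)$, so $G\in H_v(2_r;r-2)$ and
$F_v(2_r;r-2)\le r+9$.

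For part~(a), I would argue by contradiction. Let $G$ be an extremal
graph in $H_v(2_r;r-2)$ with $n:=|V(G)|\le r+8$. Since $\cl[G]\le r-3$
passes to every $G-v$, extremality forces $\chi(G-v)\le r$ for every
$v\in V(G)$, which with $\chi(G)\ge r+1$ yields $\chi(G)=r+1$ and
vertex-criticality of $G$; hence $\delta(G)\ge r$ and
$\Delta(\ol G)\le 7$. I would then induct on $r$, the main reduction
being the following: if some $v\in V(G)$ satisfies $\cl[G-v]\le r-4$,
then by criticality $\chi(G-v)=r$, so $G-v\in H_v(2_{r-1};r-3)$, and
the induction hypothesis yields $n-1\ge(r-1)+9$, contradicting
$n\le r+8$. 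Hence one may assume that every vertex of $G$ lies in some
$(r-3)$-clique and, in particular, $\cl[G]=r-3$. Fixing one such
clique $K$, the induced subgraph $G-K$ has at most $11$ vertices,
satisfies $\cl[G-K]\le r-3$, and has $\chi(G-K)\ge\chi(G)-|K|=4$.

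The main obstacle is ruling out this last configuration. The plan is
to combine the minimum-degree bound $\delta(G)\ge r$ (so every vertex
has at most $7$ non-neighbours, all of a vertex in $K$ lying in
$V(G)\setminus K$) with the smallness of $G-K$, and then to invoke
Theorems~\ref{NN:th13} and~\ref{NN:th15} together with the Ramsey
bounds in Table~\ref{NN:t1} to force either an $(r-2)$-clique in $G$
or a proper $r$-colouring of $G$, each contradicting our hypotheses.
The crucial input is $R(6,3)=18$, which constrains the way the
$(r-3)$-cliques can overlap on the $\le 11$ vertices of $G-K$; this
sharpened control is what allows the threshold to be pushed from
$r\ge 11$ (as in~\cite{NN:16}) down to $r\ge 8$.

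The base cases $r=5,6,7$ need separate treatment: for $r=5$ the
claim $F_v(2_5;3)\ge 14$ is much weaker than the known minimum
order of a triangle-free $6$-chromatic graph and is therefore
immediate, while for $r=6,7$ a direct analysis of $(r+1)$-critical
graphs of order $\le r+8$ with $\cl\le r-3$, using
Theorems~\ref{NN:th14} and~\ref{NN:th15} in place of the induction
hypothesis, suffices. The hardest part of the whole argument, as
anticipated, is the structural analysis in the case where every
vertex of $G$ lies in some $(r-3)$-clique; it is precisely in this
case that the above Ramsey-based overlap control is needed.
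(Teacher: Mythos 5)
Your part~(b) is correct: the witness $K_{r-8}+G_0$ with $G_0$ a $17$-vertex Ramsey graph for $R(6,3)=18$ is essentially the paper's own construction (the paper uses $K_{r-9}+P+C_5$ for $r\ge 9$ and the $17$-vertex graph only for $r=8$, but your uniform version works just as well). Part~(a), however, contains a genuine gap. Everything after your clique-deletion reduction is a plan, not a proof: you arrive at the configuration where $G$ is $(r+1)$-vertex-critical, $n\le r+8$, every vertex has at most $7$ non-neighbours, and $G-K$ is an $11$-vertex graph with $\chi(G-K)\ge 4$, and then you say you ``would invoke'' Theorems~\ref{NN:th13} and~\ref{NN:th15} and Ramsey bounds to force a contradiction. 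No contradiction is actually derived, and the configuration is not self-evidently impossible (an $11$-vertex graph with chromatic number $4$ is unremarkable; the constraint $\cl[G-K]\le r-3$ is vacuous for large $r$). There is also a logical slip in the reduction itself: the negation of ``some $v$ satisfies $\cl[G-v]\le r-4$'' is ``every vertex is \emph{avoided} by some $(r-3)$-clique,'' not ``every vertex \emph{lies} in some $(r-3)$-clique.'' Your treatment of the base cases $r=6,7$ is likewise only an assertion that ``a direct analysis suffices.''

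For comparison, the paper closes the $r\ge 8$ case by an entirely different and complete route. Having reduced to an extremal $G$, Lemma~\ref{NN:l22} gives vertex-criticality and $\cl=r-3$, hence $f(G)=\chi(G)-\cl=4$. If $|V(G)|\ge 2r+1$ one is done since $r\ge 8$. If $|V(G)|<2r+1=2\chi(G)-1$, Gallai's theorem (Theorem~\ref{NN:th21}) forces a join decomposition $G=G_1+G_2$; if some summand is complete one peels off a $K_1$ and applies the induction hypothesis, and otherwise $f(G_i)\le 3$ for both summands, so Corollary~\ref{NN:cor41} (which rests on Theorem~\ref{NN:th15}(a)) gives $|V(G_i)|\ge\chi(G_i)+2f(G_i)$, and summing yields $|V(G)|\ge\chi(G)+2f(G)=r+9$. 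The base cases $5\le r\le 7$ are handled by Lemma~\ref{NN:l23} plus the bound $R(r-2,3)<r+8$, which forces $\ga(G)\ge 3$ and hence $|V(G)|\ge F_v(2_{r-1};r-2)+3\ge r+9$. If you want to salvage your approach you would need to supply the missing structural analysis; as written, the central case of part~(a) is unproved.
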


The numbers $F_v(2_r;r-2)$, $5\le r\le 7$, are unknown.

\section{Auxiliary results}

Let $G$ be an arbitrary graph. Define
\[
f(G)=\chi(G)-\cl.
\]

\begin{lem}\label{NN:l21}
Let $G$ be a graph such that $f(G)\le 2$. Then $|V(G)|\ge\chi(G)+2f(G)$.
\end{lem}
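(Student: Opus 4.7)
The plan is to prove the inequality by a case analysis on $f(G)\in\{0,1,2\}$, reducing each case to a statement already recorded in the introduction.

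For $f(G)=0$ the inequality reads $|V(G)|\ge\chi(G)$, which is immediate since any proper coloring of $G$ uses $\chi(G)$ non-empty color classes. For $f(G)=1$, I would set $r=\cl$; then $\chi(G)\ge r+1$ and $\cl\le r$, so Dirac's theorem (Theorem~\ref{NN:th11}(a)) yields $|V(G)|\ge r+3=\chi(G)+2=\chi(G)+2f(G)$.

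The substantive case is $f(G)=2$. I would set $r=\cl+1$, giving $\chi(G)=r+1$ and $\cl=r-1<r$. By~\eqref{NN:eq13} this means $G\tov(2_r)$, hence $G\in H_v(2_r;r)$ and $|V(G)|\ge F_v(2_r;r)$. When $r\ge 5$, Theorem~\ref{NN:th14}(a) gives $F_v(2_r;r)=r+5$, so $|V(G)|\ge r+5=\chi(G)+4$. The sub-cases $r=3$ and $r=4$ correspond to $\cl\in\{2,3\}$ and I would handle them via the exact values $F_v(2_3;3)=F_v(2_4;4)=11$ from~\eqref{NN:eq15} and~\eqref{NN:eq16}; in both instances $11$ comfortably exceeds the required threshold $r+5\in\{8,9\}$. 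The values $r\le 2$ do not arise, since $\cl\le 1$ would force $\chi(G)\le 1$, contradicting $\chi(G)=r+1\ge 3$.

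I do not foresee a real obstacle: the proof is essentially a dispatch to Dirac's theorem for $f(G)=1$ and to Theorem~\ref{NN:th14} for $f(G)=2$, with the two small-clique exceptions absorbed by the numerical slack in~\eqref{NN:eq15} and~\eqref{NN:eq16}. The only subtlety is verifying that the parameter shift $r=\cl+1$ in the last case lines up with the hypothesis $r\ge 5$ of Theorem~\ref{NN:th14}, which is why the small-clique cases must be dispatched separately.
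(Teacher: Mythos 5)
Your proof is correct and follows essentially the same route as the paper's: a case split on $f(G)\in\{0,1,2\}$, invoking Dirac's theorem for $f(G)=1$ and Theorem~\ref{NN:th14}(a) together with the values $F_v(2_3;3)=F_v(2_4;4)=11$ from~\eqref{NN:eq15} and~\eqref{NN:eq16} for $f(G)=2$. The only cosmetic difference is that you cite Theorem~\ref{NN:th11} directly where the paper uses its Folkman-number reformulation Theorem~\ref{NN:th12}(a); these are equivalent by~\eqref{NN:eq13}.
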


\begin{proof}[\bf Proof.]
Since $\chi(G)\ge\cl$, $f(G)\ge 0$. If $f(G)=0$ the inequality is trivial.
Let $f(G)=1$ and $\chi(G)=r+1$. Then $\cl=r$. Note that $r\ge 2$ because
$\chi(G)\ne\cl$. By~\eqref{NN:eq13}, $G\in H_v(2_r;r+1)$. Thus, from
Theorem~\ref{NN:th12}(a), it follows $|V(G)|\ge r+3=2f(G)+\chi(G)$.
Let $f(G)=2$ and $\chi(G)=r+1$. Then $\cl=r-1$. Since $\chi(G)\ne\cl$,
$\cl=r-1\ge 2$, i.e. $r\ge 3$. From Theorem~\ref{NN:th14}(a), \eqref{NN:eq15}
and \eqref{NN:eq16} we obtain that $|V(G)|\ge r+5=\chi(G)+2f(G)$. This
completes the proof of Lemma~\ref{NN:l21}.
\end{proof}

Let $G=G_1+G_2$. Obviously,
\begin{align}\label{NN:eq21}
\chi(G)&=\chi(G_1)+\chi(G_2);\\
\cl&=\cl[G_1]+\cl[G_2].\label{NN:eq22}
\end{align}

Hence,
\beq{NN:eq23}
f(G)=f(G_1)+f(G_2).
\eeq

A graph $G$ is defined to be vertex-critical chromatic if $\chi(G-v)<\chi(G)$
for all $v\in V(G)$. We shall use the following result in the proof of
Theorem~\ref{NN:th16}.

\begin{theorem}[\cite{NN:4}, see also \cite{NN:5}]\label{NN:th21}
Let $G$ be a vertex-critical chromatic graph and $\chi(G)\ge 2$.
If $|V(G)|<2\chi(G)-1$, then $G=G_1+G_2$, where $V(G_i)\ne\emptyset$,
$i=1,2$.
\end{theorem}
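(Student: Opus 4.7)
The plan is to prove the contrapositive: if $G$ is vertex-critical with $\chi(G)=k\ge 2$ and $\ol G$ is connected (equivalently, $G$ admits no nontrivial join decomposition), then $|V(G)|\ge 2k-1$. So I assume $n:=|V(G)|\le 2k-2$ and aim to produce a partition $V(G)=A\sqcup B$ with $A,B\ne\emptyset$ and every cross pair $\{a,b\}$ ($a\in A$, $b\in B$) an edge of $G$.

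First I would record two structural consequences of vertex-criticality. The minimum degree of $G$ is at least $k-1$: if $\deg_G(v)<k-1$, a proper $(k-1)$-coloring of $G-v$ (which exists by vertex-criticality) extends to $G$ by assigning $v$ a colour missing from $N(v)$, contradicting $\chi(G)=k$. Moreover, in any proper $k$-coloring $V_1,\dots,V_k$ of $G$, every singleton class $\{u\}$ has at least one neighbour in each other $V_j$ (else $u$ can be reassigned to $V_j$ and one colour saved), and any two singleton vertices from distinct classes must be adjacent (else the two classes merge into a single independent set, producing a $(k-1)$-coloring).

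Next, a pigeonhole count on class sizes: writing $s$ for the number of singleton classes, $\sum|V_i|=n$ and $|V_i|\ge 1$ give $n\ge s+2(k-s)=2k-s$, so $s\ge 2k-n\ge 2$ under our hypothesis. Thus the union $S$ of singleton classes is a clique of $G$ of size at least $2$. The core of the proof is then to produce the desired universal cut $V(G)=A\sqcup B$. I would attempt this by choosing the $k$-coloring extremally with respect to a monovariant — for instance, the lexicographic order on the sorted vector of class sizes, broken by the number of non-edges between classes — and then analysing any surviving non-adjacency between a singleton vertex and a vertex of another class via a Kempe-chain exchange on the two colours involved. The goal is that such an exchange must either strictly improve the monovariant (contradicting extremality) or empty some colour class (contradicting $\chi(G)=k$).

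The main obstacle is that the natural candidate $A=S$ need not itself be a universal cut, as one sees in examples like $C_5+C_5$, where the singletons of a natural $6$-coloring are not universal in $G$ even though the required join decomposition exists. The proof (as in Gallai \cite{NN:4}) must therefore track the entire coloring, not just the singleton set, and iterate a sequence of Kempe swaps that redistribute singletons and non-singletons across colour classes until the desired partition emerges. Carrying out this monovariant-based combinatorial analysis is the technical heart of the argument; once the cut $A\sqcup B$ is produced, the join decomposition $G=G[A]+G[B]$ with both parts non-empty is immediate.
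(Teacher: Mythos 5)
There is a genuine gap: your text is a proof plan, not a proof. The preliminary observations are all correct and correctly derived --- minimum degree at least $k-1$ from vertex-criticality, the adjacency properties of singleton colour classes, and the pigeonhole count $s\ge 2k-n\ge 2$ showing that the singletons form a clique of size at least $2$. But at the decisive step you write that you ``would attempt'' to produce the universal cut by choosing a colouring extremal for an unspecified monovariant and analysing Kempe exchanges, and you then explicitly concede that ``carrying out this monovariant-based combinatorial analysis is the technical heart of the argument'' without carrying it out. You even identify (via $C_5+C_5$) why the naive choice $A=S$ fails, which is exactly the point at which a proof must begin rather than end. Nothing in the proposal establishes that the described exchange process terminates, that it cannot cycle, or that upon termination a join decomposition actually appears; these are precisely the nontrivial claims of Gallai's theorem.

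For comparison: the paper does not prove this statement at all. It is quoted from Gallai \cite{NN:4} (see also \cite{NN:5}), and the only argument the paper supplies is the Remark reducing the vertex-critical formulation to Gallai's original edge-critical one (every vertex-critical $k$-chromatic graph contains a spanning edge-critical $k$-chromatic subgraph, and a join decomposition of the subgraph's complement's components transfers back). So there is no in-paper proof for your attempt to match; judged on its own, your attempt correctly reduces the problem and assembles standard facts about critical graphs, but the combinatorial core --- the part that actually requires Gallai's argument --- is missing.
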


\textbf{Remark.}
In the original statement of Theorem~\ref{NN:th21} the graph $G$ is
edge-critical chromatic (and not vertex-critical chromatic). Since
each vertex-critical chromatic graph $G$ contains an edge-critical
chromatic subgraph $H$ such that $\chi(G)=\chi(H)$ and $V(G)=V(H)$,
the above statement of this theorem is equivalent to the original one.
They are also more convenient for the proof of Theorem~\ref{NN:th16}.

Let $G$ be a graph and $A\subseteq V(G)$ be an independent set of vertices of
the graph $G$. It is easy to see that
\beq{NN:eq24}
G\tov(2_r),\ r\ge 2
\Rightarrow
G-A\tov(2_{r-1}).
\eeq

\begin{lem}\label{NN:l22}
Let $G\in H_v(2_r;q)$, $q\ge 3$ and $|V(G)|=F_v(2_r;q)$. Then
\begin{enumerate}[\indent\rm(a)]
\item
$G$ is a vertex-critical $(r+1)$-chromatic graph;
\item
if $q<r+3$ then $\cl=q-1$.
\end{enumerate}
\end{lem}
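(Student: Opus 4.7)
The plan is to prove (a) by minimality and (b) by contradiction, combining \eqref{NN:eq24} with a $K_1$-join construction.

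For (a), \eqref{NN:eq13} gives $\chi(G)\ge r+1$. If $\chi(G)\ge r+2$, then for any $v\in V(G)$ one still has $\chi(G-v)\ge r+1$ and $\cl[G-v]\le\cl<q$, so $G-v\in H_v(2_r;q)$ with fewer vertices, contradicting $|V(G)|=F_v(2_r;q)$. Thus $\chi(G)=r+1$; repeating the same deletion argument with $v$ arbitrary forces $\chi(G-v)\le r$, so $G$ is vertex-critical $(r+1)$-chromatic.

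For (b), I would assume $\cl\le q-2$. The case $q=3$ is immediate: $\cl\le 1$ forces $G$ to be edgeless, contradicting $\chi(G)=r+1\ge 2$. For $q\ge 4$, let $A$ be a maximum independent set of $G$, so $\ga(G)=|A|\ge 1$. By \eqref{NN:eq24}, $G-A\tov(2_{r-1})$; since $\cl[G-A]\le\cl\le q-2<q-1$, this gives $G-A\in H_v(2_{r-1};q-1)$, so
\[
F_v(2_{r-1};q-1)\le|V(G-A)|=F_v(2_r;q)-\ga(G).
\]
Conversely, for any extremal $H\in H_v(2_{r-1};q-1)$ the join $H+K_1$ satisfies $\chi(H+K_1)\ge r+1$ by \eqref{NN:eq21} and $\cl[H+K_1]\le q-1<q$ by \eqref{NN:eq22}, so $H+K_1\in H_v(2_r;q)$, yielding $F_v(2_r;q)\le F_v(2_{r-1};q-1)+1$. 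Combining the two bounds forces $\ga(G)\le 1$, whence $G$ is complete; being $(r+1)$-chromatic, $G=K_{r+1}$ and $\cl=r+1>q-2$, contradicting the assumption.

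The main obstacle, as I see it, is recognizing that \eqref{NN:eq24} and the $K_1$-join sandwich $F_v(2_r;q)$ between $F_v(2_{r-1};q-1)+\ga(G)$ and $F_v(2_{r-1};q-1)+1$: either inequality alone is too weak, but together they squeeze $\ga(G)$ down to $1$, at which point the hypothesis $\cl\le q-2$ becomes incompatible with $G=K_{r+1}$.
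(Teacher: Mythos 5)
Your proposal is correct and follows essentially the same route as the paper: part (a) is the standard minimality argument via \eqref{NN:eq13}, and part (b) rests on the same key mechanism of deleting an independent set, applying \eqref{NN:eq24}, and re-joining a $K_1$ to contradict minimality. The only cosmetic difference is that the paper deletes just two non-adjacent vertices $a,b$ (which exist since $\cl<r+1\le\chi(G)$) and observes directly that $K_1+(G-\{a,b\})$ is a smaller member of $H_v(2_r;q)$, whereas you delete a maximum independent set and phrase the contradiction as a sandwich forcing $\ga(G)\le 1$; both arguments are sound.
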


\begin{proof}[\bf Proof.]
By~\eqref{NN:eq13}, $\chi(G)\ge r+1$. Assume that (a) is wrong. Then there
exists $v\in V(G)$ such that $\chi(G-v)\ge r+1$. According to~\eqref{NN:eq13},
$G-v\in H_v(2_r;q)$. This contradicts the equality $|V(G)|=F_v(2_r;q)$.

Assume that (b) is wrong, i.e. $\cl\le q-2$. Then from $q<r+3$ it follows
that $\cl<r+1$. Since $\chi(G)\ge r+1$ there are $a,b\in V(G)$ such that
$[a,b]\notin E(G)$. Consider the subgraph $G_1=G-\{a,b\}$. We have $r\ge 2$,
because $\chi(G)\ne\cl$. Thus, from~\eqref{NN:eq24} and $\cl\le q-2$ it follows
that $G_1\in H_v(2_{r-1};q-1)$. Obviously, $G_1\in H_v(2_{r-1};q-1)$ leads to
$K_1+G_1\in H_v(2_r;q)$. This contradicts the equality $|V(G)|=F_v(2_r;q)$,
because $|V(K_1+G_1)|=|V(G)|-1$. Lemma~\ref{NN:l22} is proved.
\end{proof}

\begin{lem}\label{NN:l23}
Let $G\in H_v(2_r;q)$, $r\ge 2$. Then
\[
|V(G)|\ge F_v(2_{r-1};q)+\ga(G).
\]
\end{lem}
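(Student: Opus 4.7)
The plan is to apply the already-stated fact~\eqref{NN:eq24} to a maximum independent set of $G$. This reduces the claim to a one-line bookkeeping argument.

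First I would pick an independent set $A \subseteq V(G)$ with $|A| = \alpha(G)$ and set $G' = G - A$. The hypothesis $r \ge 2$ lets us invoke~\eqref{NN:eq24}, giving $G' \tov(2_{r-1})$. Next I would observe that removing vertices cannot increase the clique number, so $\cl[G'] \le \cl < q$. Combining these two facts yields $G' \in H_v(2_{r-1};q)$.

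Consequently, by the very definition of the Folkman number, $|V(G')| \ge F_v(2_{r-1};q)$. Since $|V(G')| = |V(G)| - \alpha(G)$, rearranging gives the desired inequality $|V(G)| \ge F_v(2_{r-1};q) + \alpha(G)$.

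There is no real obstacle here: both needed facts (the arrow behaviour under removal of an independent set, and the monotonicity of the clique number) are immediate from the definitions and~\eqref{NN:eq24}. The only thing to be careful about is the hypothesis $r \ge 2$, which is required for~\eqref{NN:eq24} to be applicable, and the remark that the existence of $F_v(2_{r-1};q)$ is guaranteed in the range we care about by~\eqref{NN:eq12} once $q \ge 3$ (and the inequality is trivially true otherwise since $G' \in H_v(2_{r-1};q)$ witnesses existence directly).
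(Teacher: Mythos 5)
Your proposal is correct and is essentially identical to the paper's own proof: remove a maximum independent set, apply~\eqref{NN:eq24} to conclude the remaining graph lies in $H_v(2_{r-1};q)$, and count vertices. The extra remarks on clique-number monotonicity and existence are fine but not needed beyond what the paper already assumes.
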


\begin{proof}[\bf Proof.]
Let $A\se V(G)$ be an independent set such that $|A|=\ga(G)$. Consider
the subgraph $G_1=G-A$. According to~\eqref{NN:eq24}, $G_1\in H_v(2_{r-1};q)$.
Hence $|V(G_1)|\ge F_v(2_{r-1};q)$. Since $|V(G)|=|V(G_1)|+\ga(G)$,
Lemma~\ref{NN:l23} is proved.
\end{proof}

We shall use also the following three results:
\begin{alignat}{2}\label{NN:eq25}
&F_v(2,2,p;p+1)\ge 2p+4,&&\qquad\text{see \cite{NN:20};}\\
&F_v(2,2,4;5)=13,&&\qquad\text{see \cite{NN:22}.}\label{NN:eq26}
\end{alignat}
 \begin{theorem}[\cite{NN:12}]\label{NN:th22}
Let $G$ be a graph, $\cl\le p$ and $|V(G)|\ge p+2$, $p\ge 2$.
Let $G$ also have the following two properties:
\begin{enumerate}[\indent\rm(i)]
\item
$G\not\tov(2,2,p)$;
\item
If $V(G)=V_1\cup V_2\cup V_3$ is a $(2,2,p)$-free 3-coloring then
$|V_1|+|V_2|\le 3$.
\end{enumerate}

Then $G=K_1+G_1$.
\end{theorem}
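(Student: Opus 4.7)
I argue by contradiction. Suppose $G\ne K_1+G_1$; then every vertex of $G$ has at least one non-neighbor, i.e., $\ol G$ has no isolated vertex. The plan is to exhibit a $(2,2,p)$-free $3$-coloring of $G$ with $|V_1|+|V_2|\ge 4$, contradicting~(ii).

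First I show that $\ol G$ has a matching of size at least~$2$. A graph with no isolated vertex whose matching number is $1$ must be a star or a triangle. A triangle forces $|V(G)|=3<p+2$; a spanning star $K_{1,n-1}$ in $\ol G$ makes $G$ the disjoint union of $K_{n-1}$ with one isolated vertex, so $\cl=n-1\ge p+1$, contradicting $\cl\le p$. Hence there are four distinct vertices $a,b,c,d\in V(G)$ with $\{a,b\},\{c,d\}\notin E(G)$.

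If $\cl\le p-1$, then $V_1=\{a,b\}$, $V_2=\{c,d\}$, $V_3=V(G)\setminus\{a,b,c,d\}$ is a $(2,2,p)$-free coloring with $|V_1|+|V_2|=4$, and we are done. So assume $\cl=p$. Fix a $(2,2,p)$-free coloring with $s:=|V_1|+|V_2|$ maximal; by~(ii), $s\le 3$, and since $V_3\neq\emptyset$ (else $s=|V(G)|\ge p+2>3$), moving a vertex of $V_3$ into an empty color class rules out $V_1=\emptyset$ and $V_2=\emptyset$. Hence $(|V_1|,|V_2|)\in\{(1,1),(1,2),(2,1)\}$. In the $(1,1)$ case, maximality of $s$ forces every vertex of $V_3$ to be adjacent to both elements of $V_1\cup V_2$; if these two elements were themselves adjacent, one of them would be dominating, so $V_1\cup V_2$ is a non-edge, and regrouping yields $s\ge 3$. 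In the $(1,2)$ case (and symmetrically $(2,1)$), writing $V_1=\{a_1\}$, $V_2=\{a_2,a_3\}$, non-dominance of $a_1$ provides a further non-edge inside $V_1\cup V_2$. Combining this with the non-edges supplied by the matching and reshuffling, I obtain a $(2,2,p)$-free coloring with $|V_1'|+|V_2'|=4$, contradicting~(ii).

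The main obstacle is the reshuffle in the case $\cl=p$: $V_3'$ must remain free of $p$-cliques. Any $p$-clique lying entirely inside the new $V_3'$ must use a vertex that migrated from $V_1\cup V_2$ into $V_3'$. Since $\mathrm{cl}(G[V_3])\le p-1$, every such offending $p$-clique consists of a $(p-1)$-clique in $V_3$ plus exactly one displaced vertex. The decisive combinatorial step is to choose the augmenting non-edge inside $V_3$ so that it meets every such $(p-1)$-clique—and it is here that the no-dominating-vertex hypothesis is used a second time.
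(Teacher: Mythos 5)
First, note that the paper does not prove this statement at all: Theorem~\ref{NN:th22} is imported from~\cite{NN:12} without proof, so there is no internal argument to compare yours against and I can only assess the proposal on its own terms. Your framework is sound and, as it happens, completable. The reduction to exhibiting a $(2,2,p)$-free coloring with $|V_1|+|V_2|\ge 4$, the matching of size $2$ in $\ol G$ (the star and triangle exclusions are handled correctly), the disposal of the case $\mathrm{cl}(G)\le p-1$, and the normalization of a maximal free coloring to the shapes $(1,1)$, $(1,2)$, $(2,1)$ with the forced adjacencies are all fine; the $(1,1)$ shape even closes outright, since the second matching edge must then lie inside $V_3$.

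The genuine gap is the step you yourself label ``decisive'': it is asserted, not proved, and the mechanism you propose for it is the wrong one. In the $(1,2)$ case, after the forced adjacencies one is left (up to symmetry, and after dispatching the easy subcases) with the configuration $[a_1,a_2]\notin E(G)$, $[a_1,a_3]\in E(G)$, and $a_1,a_3$ both adjacent to every vertex of $V_3$; the only available second independent pair then lies inside $V_3$, and displacing $a_3$ into the new third class threatens to complete a $p$-clique. Your fix --- choose the non-edge of $V_3$ so that it meets every $(p-1)$-clique of $G[V_3]$ --- cannot work in general: two vertices cannot form a transversal of three pairwise disjoint $(p-1)$-cliques, and nothing you have established excludes that. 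Nor is ``non-dominance used a second time'' the right lever. What actually rescues the argument is the hypothesis $\mathrm{cl}(G)\le p$: since $a_1$ and $a_3$ are adjacent and both dominate $V_3$, a $(p-1)$-clique in $V_3$ would extend to a $(p+1)$-clique, so $G[V_3]$ contains \emph{no} $(p-1)$-clique whatsoever. As $|V_3|\ge |V(G)|-3\ge p-1\ge 2$ (for $p\ge 3$; for $p=2$ this configuration is already impossible, since $a_1$, $a_3$ and any vertex of $V_3$ would form a triangle), $G[V_3]$ is therefore not complete, any non-edge $\{c,d\}\subseteq V_3$ will do, and $\{a_1,a_2\}$, $\{c,d\}$, $(V_3\setminus\{c,d\})\cup\{a_3\}$ is a $(2,2,p)$-free coloring with $|V_1|+|V_2|=4$, the desired contradiction. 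Until an observation of this kind is supplied, the proof is incomplete at exactly the point you flagged.
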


\section{An upper bound for the numbers $F_v(2_r;q)$}

Consider the graph $P$, whose complementary graph $\ol P$ is given in
Fig.~\ref{NN:f1}.
\begin{figure}[b]
\centering
\includegraphics[scale=1.5]{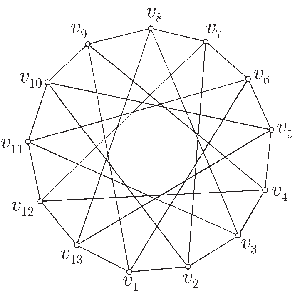}
\caption{Graph $\ol P$}\label{NN:f1}
\end{figure}
This graph is a well-known construction of Greenwood and Gleason~\cite{NN:6},
which shows that $R(5,3)\ge 14$, because $|V(P)|=13$ and
\begin{align}\label{NN:eq31}
\ga(P)&=2;\\
\cl[P]&=4
\qquad
\text{see \cite{NN:6}.}\label{NN:eq32}
\end{align}
From $|V(P)|=13$ and~\eqref{NN:eq31} it follows that $\chi(P)\ge 7$. Since
\(
\{v_1\}\cup\{v_2,v_3\}\cup\dots\cup\{v_{12},v_{13}\}
\)
is a 7-chromatic partition of $V(P)$, we have
\beq{NN:eq33}
\chi(P)=7.
\eeq

Let $r$ and $s$ be non-negative integers and $r\ge 3s+6$. Define
\[
\tilde P=K_{r-3s-6}+P+\underbrace{C_5+\dots+C_5}_s.
\]
From~\eqref{NN:eq21}, \eqref{NN:eq22}, \eqref{NN:eq32} and~\eqref{NN:eq33}
we obtain that $\chi(\tilde P)=r+1$ and $\cl[\tilde P]=r-s-2$.
By~\eqref{NN:eq13} $\tilde P\in H_v(2_r;r-s-1)$ and thus
\[
F_v(2_r;r-s-1)\le|V(\tilde P)|.
\]
Since $|V(\tilde P)|=r+2s+7$, we prove the following

\begin{theorem}\label{NN:th31}
Let $r$ and $s$ be non-negative integers and $r\ge 3s+6$. Then
\[
F_v(2_r;r-s-1)\le r+2s+7.
\]
\end{theorem}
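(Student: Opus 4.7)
The theorem is essentially a construction statement, so the plan is entirely constructive: exhibit a graph $\tilde{P}$ with the right parameters and verify it belongs to $H_v(2_r;r-s-1)$.

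The plan is to use the graph $\tilde{P} = K_{r-3s-6} + P + \underbrace{C_5 + \dots + C_5}_{s}$ already described in the paragraph preceding the theorem, where $P$ is the Greenwood--Gleason graph on $13$ vertices satisfying $\ga(P)=2$, $\cl[P]=4$, and $\chi(P)=7$. The hypothesis $r \ge 3s+6$ is exactly what is needed for the first summand $K_{r-3s-6}$ to make sense as a non-empty (or possibly empty) complete graph.

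First I would compute the chromatic number of $\tilde{P}$. Iterated application of the identity $\chi(G_1+G_2) = \chi(G_1)+\chi(G_2)$ from \eqref{NN:eq21}, together with $\chi(K_{r-3s-6}) = r-3s-6$, $\chi(P) = 7$ (by \eqref{NN:eq33}), and $\chi(C_5) = 3$, gives
\[
\chi(\tilde{P}) = (r-3s-6) + 7 + 3s = r+1.
\]
Then I would compute the clique number. Similarly, from $\cl[G_1+G_2] = \cl[G_1]+\cl[G_2]$ in \eqref{NN:eq22}, together with $\cl[K_{r-3s-6}] = r-3s-6$, $\cl[P] = 4$ (by \eqref{NN:eq32}), and $\cl[C_5] = 2$, one obtains
\[
\cl[\tilde{P}] = (r-3s-6) + 4 + 2s = r-s-2.
\]

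With these two values in hand, the conclusion is immediate. Since $\chi(\tilde{P}) = r+1$, the equivalence \eqref{NN:eq13} yields $\tilde{P} \tov (2_r)$. Since $\cl[\tilde{P}] = r-s-2 < r-s-1$, we have $\tilde{P} \in H_v(2_r;r-s-1)$. Counting vertices,
\[
|V(\tilde{P})| = (r-3s-6) + 13 + 5s = r+2s+7,
\]
which gives $F_v(2_r;r-s-1) \le r+2s+7$ by definition of the Folkman number.

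There is no real obstacle here; the construction and its verification are routine once the properties \eqref{NN:eq32} and \eqref{NN:eq33} of the Greenwood--Gleason graph are accepted. The only mild subtlety is the boundary case $r = 3s+6$, where the first join factor $K_{r-3s-6} = K_0$ is the empty graph; the additivity formulas \eqref{NN:eq21}--\eqref{NN:eq22} and the vertex count remain valid under the convention that $K_0$ contributes $0$ to each of these quantities, so the argument goes through unchanged.
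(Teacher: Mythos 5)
Your proposal is correct and follows essentially the same route as the paper: both construct $\tilde P=K_{r-3s-6}+P+C_5+\dots+C_5$, compute $\chi(\tilde P)=r+1$ and $\cl[\tilde P]=r-s-2$ via the additivity formulas \eqref{NN:eq21}--\eqref{NN:eq22} together with \eqref{NN:eq32}--\eqref{NN:eq33}, and conclude membership in $H_v(2_r;r-s-1)$ from \eqref{NN:eq13} with the vertex count $r+2s+7$. Your remark on the boundary case $K_0$ is a harmless extra clarification not spelled out in the paper.
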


\textbf{Remark.}
Since $r\ge 3s+6$ we have $r-s-1>2$. Thus, according to (1.2), the numbers
$F_v(2_r;r-s-1)$ exists.

\section{Proof of Theorem~\ref{NN:th15}}

\begin{proof}[Proof of Theorem~\ref{NN:th15}(a)]
Let $G\in H_v(2_r;r-1)$. We need to prove that $|V(G)|\ge r+7$.
From Lemma~\ref{NN:l23} we have that
\[
|V(G)|\ge F_v(2_{r-1};r-1)+\ga(G).
\]
By~\eqref{NN:eq15}, \eqref{NN:eq16} and Theorem~\ref{NN:th14}(a),
$F_v(2_{r-1};r-1)\ge r+4$. Hence
\beq{NN:eq41}
|V(G)|\ge r+4+\ga(G).
\eeq
We prove the inequality $|V(G)|\ge r+7$ by induction on $r$.
From Table~\ref{NN:t1} we see that
\beq{NN:eq42}
R(r-1,3)<r+6
\text{ if $r=4$ or $r=5$.}
\eeq
Obviously, from $G\in H_v(2_r;r-1)$ it follows that $\chi(G)\ne\cl$. Thus,
$\ga(G)\ge 2$. From~\eqref{NN:eq41} we obtain $|V(G)|\ge r+6$. From this
inequality and~\eqref{NN:eq42} we see that $|V(G)|>R(r-1,3)$ if $r=4$ or
$r=5$. Since $\cl<r-1$, it follows that $\ga(G)\ge 3$.
Now from~\eqref{NN:eq41} we obtain that $|V(G)|\ge r+7$ if $r=4$ or $r=5$.

Let $r\ge 6$. We shall consider two cases:

\textbf{Case 1.}
$G\not\tov(2,2,r-2)$. From Theorem~\ref{NN:th22} we see that only following
two subcases are possible:

\textbf{Subcase 1a.}
$G=K_1+G_1$. From $G\in H_v(2_r,r-1)$ it follows that
$G_1\in H_v(2_{r-1};r-2)$. By the inductive hypothesis, $|V(G_1)|\ge r+6$.
Therefore, $|V(G)|\ge r+7$.

\textbf{Subcase 1b.}
There is a $(2,2,r-2)$-free 3-coloring $V(G)=V_1\cup V_2\cup V_3$ such that
$|V_1|+|V_2|\ge 4$. Consider the subgraph $\tilde G=G[V_3]$. By assumption
$\tilde G$ does not contain an $(r-2)$-clique, i.e. $\cl[\tilde G]<r-2$.
Since $V_1$ and $V_2$ are independent sets and $G\tov(2_r)$ it follows
from~\eqref{NN:eq24} that $\tilde G\tov(2_{r-2})$. Thus,
$\tilde G\in H_v(2_{r-2};r-2)$. By~\eqref{NN:eq16} and Theorem~\ref{NN:th14}(a),
$|V(\tilde G)|\ge r+3$. As $|V_1|+|V_2|\ge 4$, we have $|V(G)|\ge 7$.

\textbf{Case 2.}
$G\tov(2,2,r-2)$. Since $\cl<r-1$, $G\in H_v(2,2,r-2;r-1)$.
From~\eqref{NN:eq25} it follows that $|V(G)|\ge 2(r-2)+4=2r$.
Hence, if $2r\ge r+7$, i.e. $r\ge 7$, we have $|V(G)|\ge r+7$.
Let $r=6$. Then $G\in H_v(2,2,4;5)$. By~\eqref{NN:eq26}, $|V(G)|\ge 13$.
\end{proof}

\begin{proof}[Proof of Theorem~\ref{NN:th15}(b)]
Let $r\ge 6$. According to Theorem~\ref{NN:th15}(a) we have
$F_v(2_r;\allowbreak r-1)\ge r+7$. From Theorem~\ref{NN:th31} ($s=0$)
we obtain the opposite inequality $F_v(2_r;r-1)\le r+7$.
\end{proof}

\begin{proof}[Proof of Theorem~\ref{NN:th15}(c)]
There is a 16-vertex graph $G$ such that $\ga(G)=3$ and $\cl=3$, because
$R(4,4)=18$ (see \cite{NN:6}). From $|V(G)|=16$ and $\ga(G)=3$ obviously
it follows that $\chi(G)\ge 6$. By~\eqref{NN:eq13}, $G\tov(2_5)$. So,
$G\in H_v(2_5;4)$. Hence $F_v(2_5;4)\le|V(G)|=16$.
\end{proof}

Theorem~\ref{NN:th15} is proved.

\begin{cor}\label{NN:cor41}
Let $G$ be a graph such that $f(G)\le 3$. Then $|V(G)|\ge\chi(G)+2f(G)$.
\end{cor}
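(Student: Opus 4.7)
The plan is to extend Lemma~\ref{NN:l21} by one unit in $f(G)$, using the bound just established in Theorem~\ref{NN:th15}(a). Since $\chi(G)\ge\cl$ always, $f(G)\ge 0$, so the hypothesis $f(G)\le 3$ leaves only four cases. For $f(G)\in\{0,1,2\}$ the inequality is exactly Lemma~\ref{NN:l21}, and the entire argument therefore reduces to the single remaining case $f(G)=3$.

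In that case I would set $\chi(G)=r+1$ so that $\cl=r-2$, and first check that $r\ge 4$: otherwise $\cl\le 1$, which forces $G$ to be edgeless and $\chi(G)\le 1$, contradicting $\chi(G)-\cl=3$. Hence $\cl\ge 2$, i.e.\ $r\ge 4$, which is precisely the range in which Theorem~\ref{NN:th15}(a) applies. From \eqref{NN:eq13} we have $G\tov(2_r)$, and since $\cl=r-2<r-1$ the graph $G$ belongs to $H_v(2_r;r-1)$. Applying Theorem~\ref{NN:th15}(a) then yields
\[
|V(G)|\ge F_v(2_r;r-1)\ge r+7=(r+1)+2\cdot 3=\chi(G)+2f(G),
\]
which is the desired inequality.

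There is no genuine obstacle here: Corollary~\ref{NN:cor41} is essentially a repackaging of Lemma~\ref{NN:l21} together with Theorem~\ref{NN:th15}(a), since each possible value of $f(G)\in\{0,1,2,3\}$ is covered by exactly one of those two results. The only point requiring a moment of care is the verification $r\ge 4$, without which Theorem~\ref{NN:th15}(a) could not be invoked.
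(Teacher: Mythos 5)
Your proof is correct and follows essentially the same route as the paper: reduce to the case $f(G)=3$ via Lemma~\ref{NN:l21}, verify $\cl\ge 2$ so that $r\ge 4$, place $G$ in $H_v(2_r;r-1)$ using \eqref{NN:eq13}, and apply Theorem~\ref{NN:th15}(a). No differences worth noting.
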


\begin{proof}[\bf Proof.]
If $f(G)\le 2$, then Corollary~\ref{NN:cor41} follows from Lemma~\ref{NN:l21}.
Let $f(G)=3$ and $\chi(G)=r+1$. Then $\cl=r-2$. Since $\chi(G)\ne\cl$,
$\cl\ge 2$. Thus, $r\ge 4$. By~\eqref{NN:eq13}, $G\in H_v(2_r;r-1)$.
From Theorem~\ref{NN:th15}(a) we obtain $|V(G)|\ge r+7=\chi(G)+2f(G)$.
\end{proof}

\textbf{Remark.}
In $H_v(2_r;r-1)$, $r\ge 8$ there are more than one extremal graph.
For example, in $H_v(2_8;7)$ besides $K_2+P$ (see Theorem~\ref{NN:th31}),
the graph $C_5+C_5+C_5$ is extremal, too.

\section{Proof of Theorem~\ref{NN:th16}}

\begin{proof}[Proof of Theorem~\ref{NN:th16}(a)]
Let $G\in H_v(2_r;r-2)$. We need to prove that $|V(G)|\ge r+9$.
From Lemma~\ref{NN:l23} we have that
\[
|V(G)|\ge F_v(2_{r-1};r-2)+\ga(G).
\]
By Theorem~\ref{NN:th15}(a), $F_v(2_{r-1};r-2)\ge r+6$. Thus,
\beq{NN:eq51}
|V(G)|\ge r+6+\ga(G).
\eeq
We prove the inequality $|V(G)|\ge r+9$ by induction on $r$.
From Table~\ref{NN:t1} we see that
\beq{NN:eq52}
R(r-2,3)<r+8,
\quad
5\le r\le 7.
\eeq
Obviously, from $G\in H_v(2_r;r-2)$ it follows that $\chi(G)\ne\cl$.
Thus, $\ga(G)\ge 2$. From~\eqref{NN:eq51} we obtain $|V(G)|\ge r+8$.
This, together with~\eqref{NN:eq52}, implies $|V(G)|>R(r-2,3)$ if
$5\le r\le 7$. Since $\cl<r-2$, $\ga(G)\ge 3$.
By the inequality~\eqref{NN:eq51}, $|V(G)|\ge r+9$, $5\le r\le 7$.

Let $r\ge 8$. Obviously, it is enough to consider only the situation when
\beq{NN:eq53}
|V(G)|=F_v(2_r;r-2).
\eeq
By~\eqref{NN:eq53} and Lemma~\ref{NN:l22} we have that
\beq{NN:eq54}
\text{\itshape $G$ is a vertex-critical $(r+1)$-chromatic graph;}
\eeq
and
\beq{NN:eq55}
\cl=r-3.
\eeq
From~\eqref{NN:eq54} and~\eqref{NN:eq55} it follows that
\beq{NN:eq56}
f(G)=4.
\eeq
We shall consider two cases.

\textbf{Case 1.}
$|V(G)|<2r+1$. By~\eqref{NN:eq54} and Theorem~\ref{NN:th21}, we obtain that
\beq{NN:eq57}
G=G_1+G_2.
\eeq
From~\eqref{NN:eq57}, \eqref{NN:eq21} and~\eqref{NN:eq54},
obviously it follows that
\beq{NN:eq58}
\text{\itshape $G_i$, $i=1,2$ is a vertex-critical chromatic graph.}
\eeq
Let $f(G_1)=0$. Then, according to~\eqref{NN:eq58} $G_1$ is a complete graph.
Thus, it follows from~\eqref{NN:eq57} that $G=K_1+G'$. It is clear that
\[
G\in H_v(2_r;r-2)
\Rightarrow
G'\in H_v(2_{r-1};r-3).
\]
By the inductive hypothesis, $|V(G')|\ge r+8$. Hence, $|V(G)|\ge r+9$.
Let $f(G_i)\ne 0$, $i=1,2$. We see from~\eqref{NN:eq57}, \eqref{NN:eq23}
and~\eqref{NN:eq56} that $f(G_i)\le 3$, $i=1,2$. By Corollary~\ref{NN:cor41},
we have
\[
|V(G_i)|\ge\chi(G_i)+2f(G_i),
\quad
i=1,2.
\]
Summing these inequalities and using~\eqref{NN:eq21} and~\eqref{NN:eq23}
we obtain
\beq{NN:eq59}
|V(G)|\ge\chi(G)+2f(G).
\eeq
According to~\eqref{NN:eq54}, $\chi(G)=r+1$. Thus, from~\eqref{NN:eq59}
and~\eqref{NN:eq56} it follows that $|V(G)|\ge r+9$.

\textbf{Case 2.}
$|V(G)|\ge 2r+1$. Since $r\ge 8$, $2r+1\ge r+9$. Hence $|V(G)|\ge r+9$.
\end{proof}

\begin{proof}[Proof of Theorem~\ref{NN:th16}(b)]
By Theorem~\ref{NN:th16}(a), $F_v(2_r;r-2)\ge r+9$. Thus, we need to prove
the opposite inequality $F_v(2_r;r-2)\le r+9$ if $r\ge 8$. If $r\ge 9$,
this inequality follows from Theorem~\ref{NN:th31} ($s=1$). Let $r=8$.
By $R(6,3)=18$, \cite{NN:11} (see also \cite{NN:7}), there is a graph $Q$
such that $|V(Q)|=17$, $\ga(Q)=2$ and $\cl[Q]=5$. From $|V(Q)|=17$ and
$\ga(Q)=2$ obviously it follows that $\chi(Q)\ge 9$.
Thus, by~\eqref{NN:eq13}, $Q\tov(2_8)$. Hence $Q\in H_v(2_8;6)$ and
$F_v(2_8;6)\le|V(Q)|=17$. Theorem~\ref{NN:th16} is proved.
\end{proof}

\begin{cor}\label{NN:cor51}
Let $G$ be a graph such that $f(G)\le 4$. Then $|V(G)|\ge\chi(G)+2f(G)$.
\end{cor}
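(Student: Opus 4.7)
The plan is to extend the argument of Corollary~\ref{NN:cor41} by one step, this time using the new bound provided by Theorem~\ref{NN:th16}(a). Since Corollary~\ref{NN:cor41} already handles the cases $f(G)\le 3$, the only remaining case is $f(G)=4$, and the entire proof reduces to checking that case.

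Setting $\chi(G)=r+1$, the assumption $f(G)=4$ forces $\cl=r-3$. The inequality $\chi(G)\ne\cl$ then gives $\cl=r-3\ge 2$, that is, $r\ge 5$, which is precisely the range in which Theorem~\ref{NN:th16}(a) applies. By~\eqref{NN:eq13}, $\chi(G)\ge r+1$ yields $G\tov(2_r)$, and combined with $\cl=r-3<r-2$ this says $G\in H_v(2_r;r-2)$. Theorem~\ref{NN:th16}(a) then gives $|V(G)|\ge r+9$, and since $r+9=(r+1)+2\cdot 4=\chi(G)+2f(G)$, the desired inequality follows.

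There is no genuine obstacle here; the argument is a direct reduction to Theorem~\ref{NN:th16}(a) together with Corollary~\ref{NN:cor41}. The only item to verify is the side condition $r\ge 5$ required by Theorem~\ref{NN:th16}(a), and this is automatic from $\chi(G)\ne\cl$.
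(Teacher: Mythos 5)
Your proof is correct and follows essentially the same route as the paper's own argument: reduce to the case $f(G)=4$ via Corollary~\ref{NN:cor41}, deduce $r\ge 5$ from $\chi(G)\ne\cl$, and apply Theorem~\ref{NN:th16}(a) to $G\in H_v(2_r;r-2)$. No differences worth noting.
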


\begin{proof}[\bf Proof.]
If $f(G)\le 3$, then Corollary~\ref{NN:cor51} follows from
Corollary~\ref{NN:cor41}. Let $f(G)=4$ and $\chi(G)=r+1$.
Then $\cl=r-3$. Since $\chi(G)\ne\cl$, $\cl\ge 2$. Thus, $r\ge 5$.
By~\eqref{NN:eq13}, $G\in H_v(2_r;r-2)$. Using Theorem~\ref{NN:th16}(a),
we get $|V(G)|\ge r+9=\chi(G)+2f(G)$.
\end{proof}

Let $r\ge 3s+8$. Define
\[
\tilde Q=K_{r-3s-8}+Q+\underbrace{C_5+\dots+C_5}_s,
\]
where graph $Q$ is given in the proof of Theorem~\ref{NN:th16}(b). Since
$\cl[Q]=5$ and $\chi(Q)\ge 9$, we have by~\eqref{NN:eq21} and~\eqref{NN:eq22}
that $\cl[\tilde Q]=r-s-3$ and $\chi(\tilde Q)\ge r+1$.
According to~\eqref{NN:eq13}, $\tilde Q\in H_v(2_r;r-s-2)$.
Thus, $F_v(2_r;r-s-2)\le|V(\tilde Q)|$. Since $|V(\tilde Q)|=r+2s+9$,
we obtain the following

\begin{theorem}\label{NN:th52}
Let $r$ and $s$ be non-negative integers and $r\ge 3s+8$. Then
\[
F_v(2_r;r-s-2)\le r+2s+9.
\]
\end{theorem}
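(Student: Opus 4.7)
The plan is to imitate the construction used in Theorem~\ref{NN:th31}, replacing the Greenwood--Gleason graph $P$ by the $17$-vertex graph $Q$ that was exhibited in the proof of Theorem~\ref{NN:th16}(b). Recall $Q$ satisfies $|V(Q)|=17$, $\ga(Q)=2$, $\cl[Q]=5$, and (since $|V(Q)|/\ga(Q)=17/2>8$) $\chi(Q)\ge 9$. The point of $Q$ is that it has the same ``efficiency'' as $P$ in the sense that adjoining it contributes many to the chromatic number while keeping the clique number moderate, and it contributes two extra vertices beyond what $P$ did.

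Assuming $r\ge 3s+8$, I would set
\[
\tilde Q=K_{r-3s-8}+Q+\underbrace{C_5+\dots+C_5}_s,
\]
interpreting $K_0$ as the empty graph in the boundary case $r=3s+8$. Applying \eqref{NN:eq21} and \eqref{NN:eq22} repeatedly (using $\chi(C_5)=3$, $\cl[C_5]=2$), I would compute
\[
\chi(\tilde Q)\ge (r-3s-8)+9+3s=r+1
\quad\text{and}\quad
\cl[\tilde Q]=(r-3s-8)+5+2s=r-s-3.
\]
By \eqref{NN:eq13}, the first inequality gives $\tilde Q\tov(2_r)$, and the second yields $\cl[\tilde Q]<r-s-2$, so $\tilde Q\in H_v(2_r;r-s-2)$.

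Finally I would count $|V(\tilde Q)|=(r-3s-8)+17+5s=r+2s+9$, which yields $F_v(2_r;r-s-2)\le r+2s+9$ and completes the proof. As a sanity check, at $s=0$ this recovers (via $Q$ rather than $P$) the bound $F_v(2_r;r-2)\le r+9$ used in Theorem~\ref{NN:th16}(b) for $r\ge 8$, and at larger $s$ it extends that bound by attaching copies of $C_5$. There is no serious obstacle here: the argument is a direct construction, and the only subtlety is the bookkeeping in the additivity formulas for $+$ and the harmless interpretation of $K_0$ when $r=3s+8$.
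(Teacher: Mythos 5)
Your construction is exactly the one the paper uses: the same graph $\tilde Q=K_{r-3s-8}+Q+C_5+\dots+C_5$, the same appeal to \eqref{NN:eq21}, \eqref{NN:eq22} and \eqref{NN:eq13}, and the same vertex count $r+2s+9$. The proposal is correct and matches the paper's proof essentially verbatim.
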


\begin{flushleft}
Faculty of Mathematics and Informatics\\
St.~Kl.~Ohridski University of Sofia\\
5, J.~Bourchier Blvd.\\
BG-1164 Sofia, Bulgaria\\
e-mail: \texttt{nenov@fmi.uni-sofia.bg}
\end{flushleft}

\end{document}